\theoremstyle{definition}
\newtheorem{Def}{Definition}
\newtheorem{Ex}{Example}
\newtheorem{Rem}{Remark}
\newtheorem{Prop}{Proposition}
\newtheorem{Thm}{Theorem}
\begin{document}
\title[Inverse images of smooth PL maps]{Smooth maps compatible with simplicial structures and inverse images}

\author{Naoki Kitazawa}
\keywords{Singularities of differentiable maps; generic maps. Differential topology. Reeb spaces}
\subjclass[2010]{Primary~57R45. Secondary~57N15.}
\address{Industry of Mathematics for Industry, Kyushu University, 744 Motooka, Nishi-ku Fukuoka 819-0395, Japan}
\email{n-kitazawa.imi@kyushu-u.ac.jp}
\maketitle
\begin{abstract}
As higher dimensional versions of geometric studies of Morse functions, there have been various studies of smooth manifolds using generic smooth maps. As fundamental results, in these studies, they have found that inverse
 images of such maps often restrict the homotopy, topology and differentiable structures of the source manifolds. For example, if generic maps such that there exist inverse images of regular values being not
  null-cobordant, then the top-dimensional homology groups of their {\it Reeb} spaces, which are defined as the spaces of all connected components of inverse images of the maps, polyhedra of dimensions equal to those of the target spaces in considerable cases, and fundamental and important tools in the theory of generic smooth maps having some or sometimes much information of homology and homotopy groups etc., are shown to be non-trivial by Hiratuka and Saeki in 2012--4. In these cases, for example, we can see that the corresponding homology groups of the source manifolds are not trivial except several special cases. 
  
  In this paper, we show an extended result where there exists a connected component which is not zero considering a corresponding element of a module defined by considering suitable equivalence classes of smooth, connected and closed manifolds of a fixed dimension. The module is defined as a generalization of a cobordism group and the relation is defined as that of a cobordism relation. We also apply the results to several specific cases.
\end{abstract}

\section{Introduction, terminologies and notation}
\label{sec:1}

As a branch of the singularity theory of differentiable maps and its application to geometry of manifolds, higher dimensional versions of geometric studies of
 Morse functions on smooth manifolds have been
  actively studied. In this stream, geometric theory of generic maps such as {\it {\rm (}topologically{\rm )} stable} maps, defined as maps such that by slight perturbations the global topological properties of the maps are invariant, has developed; see \cite{golubitskyguillemin} for generic smooth maps and their singular points.
  
As a branch of such studies, generic maps having good
 geometric properties and geometry of manifolds admitting such maps have
  been studied. As a class of generic smooth maps, generic maps such
   that the inverse images of regular values, which are
    smooth and closed submanifolds of the source manifolds, are
     simple, have been studied. For example, theorems stating
      that source manifolds bound nice compact manifolds obtained by observing
       inverse images as
        investigated in \cite{saeki}, \cite{saeki3}, \cite{saekisuzuoka} and later in \cite{kitazawa} etc.. Note that
 these facts are, in some specific cases, essentially equivalent to the fundamental principle of {\it shadows} of $3$ and $4$-dimensional manifolds. {\it Shadows} are $2$-dimensional polyhedra realized as the retracts of the $4$-dimensional compact orientable manifolds whose non-empty boundaries are closed and orientable $3$-dimensional manifolds and representing these $3$ and $4$-dimensional manifolds well (see \cite{turaev} and see also \cite{costantino} for example), and motivated by this similarity, articles related to both of these two types of ideas have been published since 2000s such as \cite{costantinothurston} and \cite{ishikawakoda}. 

As another work, which is also a key ingredient in the main theorem of this paper, in \cite{hiratukasaeki2}, Hiratuka and Saeki found a topological constraint, given by
            a generic map such as a {\it topologically stable} map such that the inverse image of a regular value includes a null-cobordant connected component, to the {\it Reeb space} of the map. 

The {\it Reeb space} of a map is defined as
            the space of all connected components of inverse images (see \cite{reeb} and \cite{sharko} for example). Reeb spaces are polyhedra of dimensions equal to those of the target spaces in considerable cases, inherit fundamental invariants of source manifolds such as homology groups etc. in suitable cases, and are convenient stuffs for studying about algebraic and differential topological properties of source manifolds. Moreover, for example, a Reeb space is essentially regarded as a shadow introduced before in the case where
 the map is from a $3$-dimensional closed and orientable manifold into the plane. Last, to explain precisely about the result of Hiratuka and Saeki, the top dimensional homology groups of the Reeb spaces do not vanish in these situations and
 except several cases the corresponding homology groups of source manifolds do not vanish.

 In this paper, we show a statement similar to the result by Saeki and Hiratuka in the case where the inverse image of a regular value includes a component which is not zero as an element
 of a module consisting of equivalence classes regarded as generalizations of cobordism groups.    

This paper is organized as the following.

In section \ref{sec:2}, we review the {\it Reeb spaces} of continuous maps and {\it triangulable} maps. For example, {\it {\rm (}topologically{\rm )}stable} maps are important generic
 smooth maps and also triangulable (see \cite{hiratukasaeki} and \cite{shiota}). We introduce some
  explicit triangulable maps including (topologically) stable maps in Example \ref{ex:1}: several explicit cases where Reeb spaces inherit homology groups of source manifolds are presented. 
  
In section \ref{sec:3}, we introduce a cobordism-like module as an extension of a cobordism group observing
  differentiable structures of manifolds (in the oriented category) and
 neighborhood relations of inverse images of regular values of a smooth triangulable map.

In section \ref{sec:4}, we extend the mentioned result \cite{hiratukasaeki2} in the case where an inverse image contains a component which is not zero in a cobordism-like module
 defined in the previous section. By obtaining a chain
 representing a non-zero cycle of the Reeb space with the coefficient group being the cobordism group, they prove that the homology group does not vanish and we prove a similar result as Theorem \ref{thm:1}, which is a main theorem of the present paper, in a similar manner.

In the last section, we apply the results to explicit maps to which we cannot apply the original result. Obtained theorems in this section are also main theorems.

In this paper, manifolds and maps between them are smooth and of class $C^{\infty}$ unless otherwise stated. The {\it singular set} of a smooth map is defined as the set of all singular points of the map, the {\it singular value set} of the map is defined as
 the image of the singular set and the {\it regular value set} of the map is the complement of the singular value set.
  
Throughout this paper, $M$ is a smooth closed manifold of dimension $m \geq 1$, $N$ is a smooth manifold of dimension $n$ without boundary satisfying
  the relation $m \geq n \geq 1$, and $f$ is a smooth map from $M$ into $N$.
   Moreover, we denote the singular set of the map by $S(f)$.

The author would like to thank for Osamu Saeki for advising him to present us a main result or Theorem \ref{thm:2} of this paper when the author first explained this to him and owing to the advice, as an additional work, the author investigated more and obtained Theorem \ref{thm:1} and other theorems. The author would like to thank for Dominik Wrazidlo for studying the present paper and presenting new ideas on cobordisms of manifolds appearing as inverse images of regular values, which have motivated the author to study about the contents of this paper further. 
The author would like to thank for Takahiro Yamamoto for several interesting discussions on the present paper.

The author is a member of the project and supported by the project Grant-in-Aid for Scientific Research (S) (17H06128 Principal Investigator: Osamu Saeki)
"Innovative research of geometric topology and singularities of differentiable mappings"
( 
https://kaken.nii.ac.jp/en/grant/KAKENHI-PROJECT-17H06128/
).
\section{Reeb spaces of maps and triangulable maps}
\label{sec:2}

\begin{Def}
\label{def:1}
 Let $X$ and $Y$ be topological spaces. For $p_1, p_2 \in X$ and for a map $c:X \rightarrow Y$, 
 we define as $p_1 {\sim}_c p_2$ if and only if $p_1$ and $p_2$ are in
 the same connected component of $c^{-1}(p)$ for some $p \in Y$. The relation is an equivalence relation on $X$.
 We denote the quotient space by $W_c:=X/{\sim}_c$ and we call it the {\it Reeb space} of $c$.
\end{Def}

 We denote the induced quotient map from $X$ into $W_c$ by $q_c$. We can define $\bar{c}:W_c \rightarrow Y$ uniquely satisfying the relation $c=\bar{c} \circ q_c$.

\begin{Def}
\label{def:2}
Let $X$ and $Y$ be polyhedra. A continuous map $c:X \rightarrow Y$ is said to
 be {\it triangulable} if there exists a pair of triangulations of $X$ and $Y$ and the pair $({\phi}_X,{\phi}_Y)$ of homeomorphisms on $X$ and $Y$, respectively, such that the composition ${\phi}_Y \circ c \circ {{\phi}_X}^{-1}:X \rightarrow Y$ is a simplicial map with respect to the given triangulations. We also say that $c$ is triangulable with respect to $({\phi}_X,{\phi}_Y)$
\end{Def}

\begin{Prop}[\cite{hiratukasaeki}]
\label{prop:1}
For a triangulable map $c:X \rightarrow Y$ with respect to $(\phi_X,\phi_Y)$, the Reeb space $W_c$ is a polyhedron given by a homeomorphism ${\phi}_c$ from a polyhedron  and two maps $q_c:X \rightarrow W_c$
   and $\bar{c}:W_c \rightarrow Y$ are triangulable maps with respect to the corresponding pairs of the homeomorphisms.
\end{Prop}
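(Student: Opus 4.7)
My plan is first to use the homeomorphisms $(\phi_X,\phi_Y)$ to reduce to the case where $c:X\to Y$ is itself a simplicial map between simplicial complexes; if I can build a polyhedron $P$ and a homeomorphism from $P$ onto $W_c$ in that case, then composing with $\phi_X$ and $\phi_Y$ will furnish the $\phi_c$ of the statement and transport the simpliciality of $q_c$ and $\bar{c}$ to the asserted ``corresponding'' pairs of homeomorphisms.

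With $c$ simplicial, the heart of the proof is a subdivision lemma: after finitely many barycentric subdivisions of the triangulations of $X$ and $Y$ (under which $c$ remains simplicial), the connected components of preimages become face-compatible in the sense that, for every pair $\tau<\sigma$ of simplices of $Y$ and every connected component $C$ of $c^{-1}(\sigma)$, the intersection $C\cap c^{-1}(\tau)$ is a single connected component of $c^{-1}(\tau)$. The rough justification is that, after enough barycentric subdivision, the open stars in $Y$ become sufficiently small that the component structure of fibers is locally constant, preventing new splittings upon passing to faces.

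Granted this compatibility, I would build the target polyhedron $P$ combinatorially: one simplex $(\sigma,C)$ for each simplex $\sigma$ of $Y$ together with each connected component $C$ of $c^{-1}(\sigma)$, realized as an affine copy of $\sigma$, with face relation $(\tau,D)<(\sigma,C)$ exactly when $\tau<\sigma$ and $D=C\cap c^{-1}(\tau)$. The tautological map $\phi_c:P\to W_c$ that sends the interior of $(\sigma,C)$ to the $\sim_c$-class of any point of $C\cap c^{-1}(\mathrm{int}(\sigma))$ is a continuous bijection, and is a homeomorphism onto $W_c$ with its quotient topology by the local model supplied by the subdivision lemma. In these coordinates $q_c$ sends each open simplex $\mathrm{int}(\tilde\tau)\subset X$ affinely onto the open simplex $(c(\tilde\tau),C)$ of $P$, where $C$ is the component of $c^{-1}(c(\tilde\tau))$ containing $\tilde\tau$, while $\bar{c}$ sends $(\sigma,C)$ identically onto $\sigma$; both are therefore simplicial by construction.

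The main obstacle I anticipate is the subdivision lemma itself, which is the genuinely combinatorial part where the work of Hiratuka and Saeki is essential. Once that is in hand, assembling $P$, the homeomorphism $\phi_c$, and the simpliciality of $q_c$ and $\bar{c}$ is a bookkeeping exercise.
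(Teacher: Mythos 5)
First, note that the paper does not prove Proposition \ref{prop:1} at all: it is quoted from \cite{hiratukasaeki}, so there is no in-paper proof to compare against. Your overall strategy (reduce to a simplicial map, subdivide until components of preimages are face-compatible, then build $W_c$ combinatorially with one simplex per component and read off simpliciality of $q_c$ and $\bar c$) is indeed the standard route and is close in spirit to what Hiratuka and Saeki actually do.

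However, your key subdivision lemma is false as stated, and the error propagates into the construction of $P$. You index the simplices of $P$ by connected components $C$ of $c^{-1}(\sigma)$ for \emph{closed} simplices $\sigma$, and you claim that after enough barycentric subdivisions $C\cap c^{-1}(\tau)$ is a single component of $c^{-1}(\tau)$. Take the tent map $c:[0,1]\rightarrow[0,1]$, $c(x)=1-|2x-1|$, simplicial for the triangulation of the source with vertices $0,\ 1/2,\ 1$. For every $k$, the simplex $\sigma_k=[1-2^{-k},1]$ of the $k$-th subdivision of the target has $c^{-1}(\sigma_k)=[1/2-2^{-k-1},\,1/2+2^{-k-1}]$, which is \emph{one} component, yet it meets $c^{-1}(\{1-2^{-k}\})$ in \emph{two} components; no number of barycentric subdivisions repairs this, because the fold point $1/2$ always lies over the vertex $1$. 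Worse, your complex $P$ would then assign a single $1$-simplex over $\sigma_k$, whereas $\bar c^{-1}({\rm Int}\,\sigma_k)$ genuinely has two components (the fibers over interior points have two components each), so $\phi_c$ is not even a bijection: your model computes the wrong Reeb space for the simplest fold. The cure is to index simplices of $P$ by components of $c^{-1}({\rm Int}\,\sigma)$ (equivalently, by components of the point-fibers $c^{-1}(p)$, $p\in{\rm Int}\,\sigma$, which agree with components of $c^{-1}({\rm Int}\,\sigma)$ because the latter fibers over the contractible ${\rm Int}\,\sigma$), and to state face-compatibility for the closure of such a component meeting $c^{-1}({\rm Int}\,\tau)$; that version is the one that subdivision (via derived/regular neighborhood arguments) actually delivers, and it is the statement you would need to extract from \cite{hiratukasaeki}. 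Finally, even in the corrected form the lemma can fail to produce a genuine simplicial complex without further subdivision (distinct simplices of $P$ can share their entire vertex set, e.g.\ for a circle folding onto an interval), so a second derived subdivision or a passage through an intermediate cell structure is also needed.
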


Definition \ref{def:2} seems to be difficult to handle explicitly and we introduce another class of generic smooth maps.

\begin{Def}
\label{def:3}
A smooth map $c:X \rightarrow Y$ is said to be {\it Reeb-triangulable} if the Reeb space $W_c$ 
is regarded as a simplicial complex satisfying the following.
\begin{enumerate}
\item The inverse images of the interior of each simplex of dimension $\dim W_c$ of $W_c$ contains no singular point.  
\item The set of all points in $W_c$ whose inverse images have singular points forms a subcomplex of $W_c$.
\item The map $\bar{c}:W_c \rightarrow Y$ is triangulable with respect to a pair $(\Phi,\phi)$ of homeomorphisms
 where $\Phi$ makes $W_c$ the same simplicial complex as already given one and where $\phi$ makes $Y$ the same PL manifold as already given canonically.
\item $\dim W_c=\dim Y$.
\end{enumerate}
\end{Def}

{\it Stable} maps are essential smooth maps in higher dimensional versions of the theory of Morse functions or the theory of global
 singularity. 
\begin{Def}
\label{def:4}
A smooth map $c:X \rightarrow Y$ is said to be {\it topologically {\rm (}$C^{\infty}${\rm )} stable} if there exists an open neighborhood of $c$ in
 the space $C^{\infty}(X,Y)$ consisting of all smooth maps from $X$ into $Y$ given the Whitney $C^{\infty}$ topology and for any map $c^{\prime}$ in the neighborhood, there exists
 a pair $(\Phi,\phi)$ of homeomorphisms {\rm (}diffeomorphisms{\rm )} satisfying the relation $c^{\prime} \circ \Phi=\phi \circ c$.
\end{Def} 
 
As fundamental facts, Morse functions on smooth closed manifolds such that the values are
 always distinct at distinct two singular
  points are topologically and $C^{\infty}$ stable and a topologically or $C^{\infty}$ stable function is
    such a Morse function. Furthermore, such functions exist
   densely on any closed manifold. See \cite{golubitskyguillemin} for such fundamental facts.
   
  {\it Fold maps} are regarded as higher dimensional versions of Morse
   functions and at each singular point $p$, $f$ is
    of the form $(x_1,\cdots,x_m) \mapsto (x_1,\cdots,x_{n-1},\sum_{k=n}^{m-i(p)}{x_k}^2-\sum_{k=m-i(p)+1}^{m}{x_k}^2)$ for some
     integers $m,n$ and $i(p)$. For a smooth map, such a singular point is said to be a {\it fold} point and for a fold point $p$, $i(p)$ is taken as a non-negative integer not larger than $\frac{m-n+1}{2}$ uniquely: we call $i(p)$ the {\it index} of $p$. For a fold map, the set of all singular points of an index is a smooth submanifold of dimension $n-1$, and the
     restriction of the fold map to the singular set is an immersion and if the map is topologically or $C^{\infty}$ stable, then the immersion is transversal.  

We omit explanations of precise properties on (stable) fold maps in this paper. For such properties and explicit examples of fold maps, see \cite{saeki} and \cite{saeki2} and see also \cite{kitazawa} and \cite{kitazawa2}.

The following is known about {\it Thom} maps; the class of Thom maps is wider than that of topologically stable maps. A continuous map is said to be {\it proper} if the inverse image of each compact set in the target space is also compact.

\begin{Prop}[Shiota, \cite{shiota}]
\label{prop:2}
Proper Thom maps are always triangulable {\rm (}with respect to a pair of homeomorphisms giving the canonical triangulations of the smooth manifolds{\rm )}.
\end{Prop}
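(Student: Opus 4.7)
The plan is to build simultaneous triangulations of $M$ and $N$ that make $f$ simplicial, working from the stratified structure that any Thom map carries. First I would invoke the defining property of a Thom map: there exist Whitney $(b)$-regular stratifications $\mathcal{S}_M$ of $M$ and $\mathcal{S}_N$ of $N$ such that $f$ is a stratified submersion and Thom's $a_f$-condition holds between any pair of strata of $M$ mapped to different strata of $N$. Properness together with Thom's first isotopy lemma then yields local topological triviality of $f$ along each stratum of $N$, with fibres that are diffeomorphic on a given stratum.

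Next I would triangulate the stratified target. By the classical triangulation theorems for (sub)analytic or, more generally, o-minimally definable stratified sets due to \L ojasiewicz, Hironaka and others, there is a compatible triangulation of $N$ in which each stratum of $\mathcal{S}_N$ is a subcomplex; moreover, this triangulation can be arranged to refine the canonical smooth PL structure on $N$. The really delicate step is to lift this to a triangulation of $M$ for which $f$ becomes simplicial. I would proceed inductively over the skeleta of $N$, starting from top-dimensional open simplices: on each such simplex $\sigma$, $f$ restricts to a locally trivial smooth fibre bundle by the isotopy lemma, so over a point of $\sigma$ one picks a triangulation of the (compact) fibre and extends it radially across $\sigma$ to obtain a product triangulation of $f^{-1}(\sigma)$.

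The principal obstacle is the compatibility across faces. When the triangulation of $N$ is descended to a lower-dimensional stratum, the fibre type jumps, and one must arrange that the chosen fibre triangulations over open simplices in the larger stratum converge, in a PL-controlled way, to the fibre triangulation chosen on the smaller stratum. This is exactly where the $a_f$-condition enters: it controls the limiting behaviour of tangent spaces to the fibres, and through a careful use of controlled tube systems one can produce PL-compatible interpolations, subdividing simultaneously in source and target whenever necessary. Properness is essential to guarantee that only finitely many simplices of the compact fibre are involved at each step, so that the inductive construction terminates stratum-by-stratum.

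Finally, because the stratifications involved refine the canonical smooth structures on $M$ and $N$, the resulting triangulations can be taken compatible with the canonical PL structures, and $f$ is simplicial with respect to them, which is the content of the statement. The hardest single step in this outline is the PL interpolation across strata of different dimension; it is here that Shiota's full technical machinery on triangulations of subanalytic and definable maps is indispensable, and it is what makes the assertion much more than a routine consequence of Thom's isotopy theorem.
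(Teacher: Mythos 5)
The paper does not prove this statement at all: Proposition \ref{prop:2} is quoted verbatim from Shiota's article on Thom's conjecture, so there is no internal argument to compare yours against. Judged on its own merits, your outline correctly identifies the ingredients (Whitney stratifications satisfying the $a_f$-condition, Thom's isotopy lemmas, triangulation of the stratified target, fibrewise lifting over open simplices), but it has a genuine gap at exactly the point you flag as ``the hardest single step.'' The claim that the $a_f$-condition together with controlled tube systems yields ``PL-compatible interpolations'' between the fibre triangulations over adjacent strata is not a step one can carry out routinely: controlled tube systems and the isotopy lemmas give only $C^0$ local triviality, with no PL control on how the trivializations degenerate as one approaches a lower stratum, and upgrading this to a genuine simplicial structure on the source compatible with the one on the target is precisely the content of Thom's conjecture, which remained open for roughly four decades. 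Your own text concedes this by saying that Shiota's ``full technical machinery \ldots is indispensable'' there, which amounts to citing the theorem you are trying to prove.

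It is also worth noting that Shiota's actual proof does not follow the naive skeleton-by-skeleton product-triangulation scheme you describe. It proceeds by working in the category of $\mathfrak{X}$-sets (an axiomatized o-minimal/subanalytic framework), locally replacing the Thom map by an $\mathfrak{X}$-map via a careful approximation, and then invoking the unique triangulation theory for $\mathfrak{X}$-sets and maps together with a removal-of-singularities type argument; the compatibility across strata is obtained from the tameness of the category rather than from a direct geometric interpolation. So your proposal is best read as an explanation of why the statement is plausible and where the difficulty lies, not as a proof; for the purposes of this paper the correct move is simply to cite \cite{shiota}, as the author does.
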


We omit the phrase  "with respect to a pair of homeomorphisms giving the canonical triangulations of the smooth manifolds" for such smooth maps in this paper.

We introduce some classes of smooth maps, give explanations on Reeb spaces and see that they are (Reeb-)triangulable. In presenting theorems and proofs later, we also present explicit Reeb spaces. 

\begin{Ex}
\label{ex:1}
\begin{enumerate}
\item
\label{ex:1.1}
Morse functions on closed manifolds are always triangulable and Reeb-triangulable; the Reeb spaces are regarded as graphs. More generally, for smooth functions on closed manifolds such that the sets of all singular values are finite sets, the Reeb spaces are also regarded as graphs. Such functions that are not (stable) Morse appear explicitly in \cite{sharko} and later in \cite{masumotosaeki} for example. 
\item
\label{ex:1.2}
A {\it simple} fold map is defined as a fold map such that each connected component of the inverse image of each singular value has only one singular point and they are triangulable and Reeb-triangulable (see \cite{saeki} for example). {\it Special generic} maps
 are defined as fold maps whose singular points are always of index $0$ and they are also simple fold maps. The Reeb space
  of a special generic map is regarded as a compact smooth manifold we can immerse into the target space. See \cite{saeki2} for an introduction to special generic maps and see also \cite{saekisakuma} and \cite{wrazidlo}. 
 Last, it is known that for a special generic map, the quotient map onto the Reeb space induces an isomorphism of homology groups whose degrees are not larger than the difference of the dimension of the source manifold and
 that of the target manifold. For a simple fold map such that inverse images of regular values are disjoint unions of spheres and satisfying suitable differential topological conditions on indices of fold points and differentiable structures of homotopy spheres appearing as inverse images of regular values (maps in Theorem \ref{thm:2} satisfy the condition), the quotient map onto the Reeb space induces an isomorphism of homology groups whose degrees are smaller than the difference of the dimension of the source manifold and
 that of the target manifold: see \cite{saekisuzuoka} and see also \cite{kitazawa} and \cite{kitazawa2}:.
\end{enumerate}
\end{Ex}

\begin{Rem}
The author does not know when a Reeb-triangulable smooth map is triangulable. It is true in some cases (see Example \ref{ex:1}) and it seems to be true for considerable classes of smooth maps. 
Moreover, it seems to be not so difficult to construct triangulable maps which are not Reeb-triangulable systematically. However, we do not consider construction of these maps in the present paper.
\end{Rem}
\section{A cobordism-like module generated by equivalence classes of smooth closed and connected manifolds}
\label{sec:3}

 We introduce a cobordism-like module generated by equivalence classes obtained from a suitable equivalence relation on a free module generated by all types of smooth, closed and connected manifolds. 

First, a {\it diffeomorphism type} is an equivalence class obtained by considering the equivalence relation on the family of smooth manifolds defined by diffeomorphisms between smooth manifolds.

  Let $k \geq 0$ be an integer. For a principle ideal domain $R$, we
   can define a free $R$-module ${\mathcal{N}}_k(R)$ (${\mathcal{O}}_k(R)$) generated by all (resp. oriented) diffeomorphism types of $k$-dimensional smooth, closed and connected (resp. oriented) manifolds which are mutually independent. 

 Let $f:M \rightarrow N$ be a triangulable smooth map from an $m$-dimensional smooth closed {\rm (resp. and oriented{\rm )} manifold of dimension $m$ into an $n$-dimensional smooth and oriented manifold. We define a submodule $A \subset {\mathcal{N}}_{m-n}(R)$ {\rm (}resp. ${\mathcal{O}}_{m-n}(R)${\rm )} satisfying several conditions.

We define a smooth compact submanifold {\it transverse} to the given smooth map.
A compact manifold $S$ smoothly embedded in $N$ is said to {\it transverse} to $f$ if for each point $a$ in $S-\partial S$ and each point $b \in f^{-1}(a)$, $df(T_b M) \oplus T_a S=T_a N$ and each point $a$ in $\partial S$ is a regular value of $f$: note that $f^{-1}(a)$ may be empty here. 

Let $\Gamma$ be a triangulation of $N$ given by an appropriate homeomorphism.
Let ${\Gamma}_f$ be the set of all connected components of inverse images of all $1$-dimensional smooth connected and compact submanifold with non-empty boundaries or closed interval smoothly embedded, transverse to the map $f$, having at most $1$ point of
 an ($n-1$)-dimensional face of a simplicial complex of $\Gamma$ and satisfying the following by $\bar{f}:W_f \rightarrow N$ satisfying $f=\bar{f} \circ q_f$.
\begin{enumerate}
\item The point in the face is not in any ($n-2$)-dimensional face.
\item The point in the face is in the interior of the curve in $N$.
\end{enumerate}
Let $\gamma \in {\Gamma}_{f}$ and let $A_{f,\gamma}$ be the set of all closed (resp. canonically oriented) connected submanifolds appearing as connected components of the inverse images of points in $\partial \gamma \bigcap {\rm Int} W_f$ by $f$. We can consider the sum of the products of $1$ or $-1$ and (resp. oriented) diffeomorphism types of all the submanifolds in $A_{f,\gamma}$ and denote the sum by $a_{f,\gamma}$. We define whether each coefficient of each diffeomorphism type is $1$ or $-1$ according to the orientation of the ($n-1$)-dimensional face induced from the $n$-dimensional oriented simplex the boundary point of the closed interval in the target belongs to. For example, see FIGURE \ref{fig:1} and in this case, the sum $a_{f,\gamma}$ is $a_1+a_2-a_3$ or $a_3-a_1-a_2$.

 Note that there may be a
 type appearing more than once and the coefficients appearing in $a_{f,\gamma}$ may not be the unit. For example, consider stable fold maps such that all the indices of singular points are $0$ or $1$, that the inverse images corresponding to regular values are disjoint unions of (oriented) standard spheres and that the differences of the dimensions of the source manifold and the target manifold are larger than $1$ or $m-n>1$, which are often considered and will be discussed also in this paper later in Theorem \ref{thm:2}. 
FIGURE \ref{fig:1} explicitly shows this case: as an explicit case, we can consider a case where two connected components of the three inverse images are diffeomorphic homotopy spheres being not standard spheres and the other is a standard sphere. In the case (with $R=\mathbb{Z}$), we can set the coefficients as $2$ and $-1$, respectively. 

\begin{figure}
\includegraphics[width=35mm]{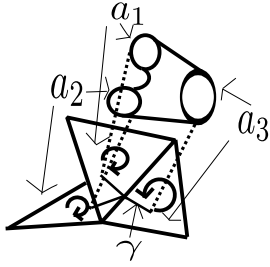}
\caption{The inverse image of a $1$-dimensional polyhedron $\gamma$ in the Reeb space by the quotient map $q_f$ (in this case the inverse image of $\gamma$ is PL homeomorphic to a manifold
 obtained by removing the interiors of three disjoint smoothly embedded closed discs of a dimension from the standard sphere of the same dimension).}
\label{fig:1}
\end{figure}

\begin{Def}
\label{def:5}
In the discussion just before, if for some triangulation $\Gamma$ of $N$ and each $\gamma \in {\Gamma}_{f}$, $a_{f,\gamma} \in A$ holds, then the submodule $A \subset {\mathcal{N}}_{m-n}(R)$ (resp. ${\mathcal{O}}_{m-n}(R)$) is said to be {\it {\rm (resp.} oriented{\rm )} compatible} with $f$.
\end{Def}
Note that the sign of each $a_{f,\gamma} \in A$ does not affect on the definition.

\begin{Def}
\label{def:6}
In the discussion here, we say that a smooth curve $\gamma$ in the target manifold $N$ is {\it simplicially transverse} to $f$ if $\gamma \in {\Gamma}_{f}$ holds for some simplicial complex $\Gamma$ of $N$ induced by an appropriate homeomorphism.
\end{Def}

\begin{Ex}
\label{ex:2}
Considering all {\rm (}oriented{\rm )} cobordism relations of the {\rm (}$m-n${\rm )}-dimensional smooth closed {\rm (}resp. and oriented{\rm )} manifolds, we can take $A$ so that ${\mathcal{N}}_{m-n}(\mathbb{Z}/2\mathbb{Z})/A$ {\rm (}resp. ${\mathcal{O}}_{m-n}(\mathbb{Z})/A${\rm )} is the ($m-n$)-dimensional smooth {\rm (}resp. oriented\rm {)} cobordism group. 
\end{Ex}

Note also that as FIGURE \ref{fig:2} shows, the inverse image of a point corresponding to a regular value in a simplex not in the interior ${\rm Int} W_f$ is zero in the module ${\mathcal{N}}_{m-n}(R)/A$ {\rm (}resp. ${\mathcal{O}}_{m-n}(R)/A{\rm )}$.

\begin{figure}
\includegraphics[width=35mm]{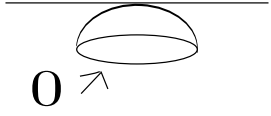}
\caption{The case of a simplex not in the interior of $W_f$.}
\label{fig:2}
\end{figure}


\begin{Ex}
Considering all {\rm (}oriented{\rm )} cobordism relations of the {\rm (}$m-n${\rm )}-dimensional smooth closed {\rm (}resp. and oriented{\rm )} manifolds, we can take $A$ so that ${\mathcal{N}}_{m-n}(\mathbb{Z}/2\mathbb{Z})/A$ {\rm (}resp. $H_n(W_f;{\mathcal{O}}_{m-n}(\mathbb{Z})/A){\rm )}$ is the ($m-n$)-dimensional smooth {\rm (}resp. oriented\rm {)} cobordism group. 
\end{Ex}

\section{The top homology groups of the Reeb spaces of triangulable maps}
\label{sec:4}      
\begin{Thm}
\label{thm:1}
  Let $f : M \rightarrow N$ be a triangulable smooth map or a Reeb-triangulable map from an $m$-dimensional smooth closed {\rm (}and oriented{\rm )} manifold $M$ into an $n$-dimensional smooth and oriented manifold $N$ satisfying $m>n \geq1$.

Let $R$ be a principle ideal domain and $A$ be an $R$-module {\rm (}resp. oriented{\rm )} compatible with $f$.
 If for a regular value $a$, there exists a connected component of $f^{-1}(a)$ such
 that the element obtained by the natural quotient map ${\mathcal{N}}_{m-n}(R)$ {\rm (}resp. ${\mathcal{O}}_{m-n}(R)${\rm )} onto ${\mathcal{N}}_{m-n}(R)/A${\rm (}resp. ${\mathcal{O}}_{m-n}(R)/A${\rm )} is not zero, then the homology group $H_n(W_f;{\mathcal{N}}_{m-n}(R)/A)$ {\rm (}resp. $H_n(W_f;{\mathcal{O}}_{m-n}(R)/A){\rm )}$ does not vanish.
\end{Thm}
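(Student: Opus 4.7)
The plan is to adapt the Hiratuka--Saeki chain-level argument to the more general coefficient module. By Proposition~\ref{prop:1} (in the triangulable case, together with the fact that $m>n$ forces $\dim W_f=n$ in the situations under consideration) or by condition~(4) of Definition~\ref{def:3} (in the Reeb-triangulable case), we have $\dim W_f=n$, so $C_{n+1}(W_f;-)=0$ and it suffices to exhibit a nonzero $n$-cycle with coefficients in ${\mathcal{N}}_{m-n}(R)/A$ (resp.\ ${\mathcal{O}}_{m-n}(R)/A$). First I would fix a triangulation $\Gamma$ of $N$ witnessing that $A$ is compatible with $f$ and pull it back via $\bar f$ to a simplicial structure $\Gamma'$ on $W_f$ compatible with $q_f$.

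Next, for each top-dimensional simplex $\sigma$ of $\Gamma'$ with interior in ${\rm Int}\,W_f$, pick an interior point $p_\sigma$; condition~(1) of Definition~\ref{def:3} guarantees that the connected component $F_\sigma := q_f^{-1}(p_\sigma) \subset M$ is a closed smooth $(m-n)$-manifold (canonically oriented from the orientations of $M$ and $N$ in the oriented case). Define
\[
c \;=\; \sum_{\sigma}\,[\sigma] \otimes [F_\sigma] \;\in\; C_n\bigl(W_f;\,{\mathcal{N}}_{m-n}(R)/A\bigr),
\]
extending by the convention $[F_\sigma]=0$ whenever $\sigma \not\subset {\rm Int}\,W_f$, as justified by the observation surrounding FIGURE~\ref{fig:2}. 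Under the hypothesis of the theorem, the specified connected component $F_0$ of $f^{-1}(a)$ whose class is nonzero lies in the interior of some top simplex $\sigma_0$ with $F_{\sigma_0}=F_0$, so $c \neq 0$ at the chain level.

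The remaining task is to verify $\partial c=0$. For each interior $(n-1)$-simplex $\tau$ of $\Gamma'$, I would pick a short smooth curve in $N$ transverse to $f$ which crosses $\bar f(\tau)$ at a single interior point and avoids the lower-dimensional skeleton of $\Gamma$; a component of its $\bar f$-preimage through $\tau$ realises an element $\gamma \in \Gamma_f$ whose boundary meets precisely the top simplices of $\Gamma'$ containing $\tau$. The coefficient of $\tau$ in $\partial c$ is then, up to a global sign, exactly the sum $a_{f,\gamma}$ of fiber classes with the orientation-induced signs of Definition~\ref{def:5}; by compatibility of $A$, this lies in $A$ and hence vanishes in the quotient. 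For $\tau \subset \partial W_f$ both sides are zero by the convention above. Thus $c$ is a nonzero cycle and, in the absence of $(n+1)$-chains, represents a nonzero class in $H_n(W_f;{\mathcal{N}}_{m-n}(R)/A)$ (resp.\ $H_n(W_f;{\mathcal{O}}_{m-n}(R)/A)$). The hard part will be reconciling the simplicial boundary convention on $\tau \subset \partial\sigma$ with the orientation convention the paper uses to assign the signs $\pm 1$ in $a_{f,\gamma}$ via the outward-pointing normals of the adjacent $n$-simplices of $\Gamma$; once this sign book-keeping is made consistent, the rest is formal.
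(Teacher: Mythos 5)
Your proposal is correct and follows essentially the same route as the paper's own proof: build the $n$-chain whose coefficient on each top simplex is the class of the fiber over an interior point, use the compatibility of $A$ (the sums $a_{f,\gamma}$ lying in $A$) to see it is a cycle, and use $\dim W_f=n$ to conclude it cannot be a boundary. You are in fact more explicit than the paper about the sign/orientation bookkeeping and about why the chain is nonzero, which the paper only gestures at via its figures and the reference to Lemma 3.1 of Hiratuka--Saeki.
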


It is regarded as an extension of the following proposition or the main result of  \cite{hiratukasaeki2} and
 we can prove Theorem \ref{thm:1} similarly. We denote the smooth oriented cobordism group of $k$-dimensional smooth closed and oriented manifolds by $\Omega_k$. 
 
\begin{Prop}[\cite{hiratukasaeki2}]
\label{prop:3}
Let {\rm (}the manifolds $M$ and $N$ be oriented and{\rm )} $f:M \rightarrow N$ be a smooth triangulable map. If for a regular value $a$, there exists a connected
 component of $f^{-1}(a)$ which is not {\rm (}resp. oriented{\rm )} null-cobordant, then for the Reeb space $W_f$, the homology group $H_n(W_f;\mathbb{Z}/2\mathbb{Z})$ {\rm (}resp. $H_n(W_f;{\Omega}_{m-n})${\rm )} is not trivial. 
\end{Prop}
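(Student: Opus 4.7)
The plan is to mimic the Hiratuka--Saeki argument from \cite{hiratukasaeki2} by writing down an explicit $n$-cycle in the simplicial chain complex of $W_f$ with coefficients in $\mathcal{N}_{m-n}(R)/A$ (resp. $\mathcal{O}_{m-n}(R)/A$) whose coefficient on at least one top-dimensional simplex is non-zero, and then invoking the fact that $\dim W_f = n$ means there are no $(n+1)$-chains for $[c]$ to be a boundary of.

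First I invoke Proposition~\ref{prop:1} (or, in the Reeb-triangulable setting, Definition~\ref{def:3} directly) to equip $W_f$ with a simplicial complex structure such that both $q_f:M\to W_f$ and $\bar f:W_f\to N$ are simplicial, and after possibly subdividing arrange that the triangulation of $N$ is one of those for which $A$ is compatible with $f$ in the sense of Definition~\ref{def:5}. For each top-dimensional $n$-simplex $\sigma$ of $W_f$, the inverse image $q_f^{-1}(\mathrm{Int}\,\sigma)$ is a trivial smooth bundle over $\mathrm{Int}\,\sigma$ whose fiber $F_\sigma$ is a closed connected $(m-n)$-manifold (oriented in the oriented case by combining the orientations of $M$ and of $\sigma$, itself induced from $N$). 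Let $[F_\sigma]$ denote its class in the quotient module $\mathcal{N}_{m-n}(R)/A$ (resp. $\mathcal{O}_{m-n}(R)/A$).

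Next I form the candidate chain
\[
c \;:=\; \sum_{\sigma} [F_\sigma]\,\sigma \;\in\; C_n\bigl(W_f;\mathcal{N}_{m-n}(R)/A\bigr),
\]
the sum being over all top-dimensional simplices $\sigma$ of $W_f$. To check $\partial c = 0$, fix an $(n-1)$-simplex $\tau$ of $W_f$ and compute its coefficient in $\partial c$. If $\tau$ lies in the interior of $W_f$, choose a short smooth curve $\gamma$ in $N$ meeting $\bar f(\tau)$ transversally in one interior point and satisfying the conditions in the construction of $\Gamma_f$; then the top-dimensional simplices of $W_f$ adjacent to $\tau$ are indexed precisely by the boundary points of $q_f^{-1}(\gamma)\cap\mathrm{Int}\,W_f$, and the coefficient of $\tau$ in $\partial c$ equals, up to a global sign, the element $a_{f,\gamma}$ of Definition~\ref{def:5}, which lies in $A$ by compatibility. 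If instead $\tau$ lies on the frontier of $W_f$, the observation around FIGURE~\ref{fig:2} shows that the unique adjacent $[F_\sigma]$ is already $0$ modulo $A$. Either way the coefficient of $\tau$ in $\partial c$ vanishes in the quotient, so $c$ is a cycle.

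For non-triviality, pick the regular value $a$ and the component $F_0\subset f^{-1}(a)$ furnished by the hypothesis, so that $[F_0]\neq 0$ in the quotient. Since $a$ is regular, $q_f(F_0)$ is a single point; by further subdividing (preserving compatibility) we may assume this point lies in the interior of some top-dimensional simplex $\sigma_0$ of $W_f$, and then $F_{\sigma_0}=F_0$ as unoriented (resp.\ oriented) manifolds. Hence the coefficient of $\sigma_0$ in $c$ is $[F_0]\neq 0$, so $c$ is a non-zero chain. Because $\dim W_f = n$ there are no $(n+1)$-chains and hence no room for $c$ to be a boundary, so $[c]\neq 0 \in H_n(W_f;\mathcal{N}_{m-n}(R)/A)$, as required. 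The main obstacle I expect is bookkeeping: verifying that the signs produced by the simplicial boundary operator on $W_f$ coincide with those used to assemble $a_{f,\gamma}$ in Definition~\ref{def:5} (both are ultimately pulled back from orientations of $n$-simplices of $N$ via $\bar f$), and ensuring that the subdivisions needed to place $q_f(F_0)$ in the interior of a top simplex and to accommodate transverse curves across every interior $(n-1)$-face preserve the compatibility of $A$ with $f$; the unoriented case reduces to signs modulo $2$ and is comparatively routine.
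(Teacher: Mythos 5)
Your argument is correct and is essentially the approach the paper itself uses (it writes out this proof for the generalization, Theorem~\ref{thm:1}, and states that the proof of Proposition~\ref{prop:3} is the same): form the chain $\sum_{\sigma}[F_{\sigma}]\sigma$, verify it is a cycle because the elements $a_{f,\gamma}$ lie in $A$ and the frontier simplices carry zero coefficient, and conclude it is not a boundary since $\dim W_f=n$. The only cosmetic gap is that to land exactly on the stated Proposition~\ref{prop:3} you should specialize $A$ as in the example making ${\mathcal{N}}_{m-n}(\mathbb{Z}/2\mathbb{Z})/A$ (resp. ${\mathcal{O}}_{m-n}(\mathbb{Z})/A$) the unoriented (resp. oriented) cobordism group, and in the unoriented case note that this group is a $\mathbb{Z}/2\mathbb{Z}$-vector space, so non-vanishing of $H_n(W_f;{\mathcal{N}}_{m-n}(\mathbb{Z}/2\mathbb{Z})/A)$ yields non-vanishing of $H_n(W_f;\mathbb{Z}/2\mathbb{Z})$.
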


Note that $R$ is $\mathbb{Z}$ and $\mathbb{Z}/2\mathbb{Z}$, respectively in this proposition.  

\begin{proof}[Proof of Theorem \ref{thm:1}.]
The proof is essentially same as that of Proposition \ref{prop:3}. For any $n$-dimensional simplex $\sigma$ of the polyhedron $W_f$, we are enough to take a chain $a_{\sigma} \sigma$ where  $a_{\sigma}$ is defined as the element obtained by the quotient map from ${\mathcal{N}}_{m-n}(R)$ onto ${\mathcal{N}}_{m-n}(R)/A$ (resp. ${\mathcal{O}}_{m-n}(R)$ onto ${\mathcal{O}}_{m-n}(R)/A$) from the (resp. oriented) diffeomorphism type of the inverse image of a point in $\sigma$ by $q_f:M \rightarrow W_f$. Note that this element does not depend on the point in the simplex by a discussion in the original paper or Lemma 3.1 of \cite{hiratukasaeki2}.

We are enough to take the sum of the elements for all the simplices in the simplicial complex. The resulting chain is a cycle by virtue of the definition of $A$ and the original proof. To see this, see FIGURE \ref{fig:1} again : the coefficient of each ($n-1$)-dimensional simplex of the boundary of the $n$-dimensional chain is zero ($a_1+a_2-a_3$ is in $A$ and by the natural quotient map it is sent to $0$). It is not a boundary since the dimension of $W_f$ is $n$. 
\end{proof}

\begin{Rem}
We do not need to assume that $N$ is oriented in the case where for each $r \in R$, $2r=0$ holds, for example. The statement of Proposition \ref{prop:3} for the case where the manifolds are not oriented gives an example of the cases where target manifolds are not oriented: in addition, for example, inverse images of regular values are also not oriented in these cases.  
\end{Rem}

\section{Modules compatible with given explicit maps and application of Theorem \ref{thm:1} to several classes of triangulable or Reeb-triangulable maps.}
First, we introduce several classes of (Reeb-)triangulable maps.
\begin{Def}
\label{def:7}
Let $f$ be a Reeb-triangulable smooth map. For the singular set $S(f)$, let $q_f(S(f))$ be a subcomplex of an appropriate simplicial
 complex $W_f$ and $W_f-{\rm Int} W_f \subset q_f(S(f))$. 
\begin{enumerate}
\item Suppose that $\dim (W_f-{\rm Int} W_f)=\dim W_f -1$ holds and that all points of $W_f-{\rm Int} W_f$ not being values
 of fold points of indice $0$ form a subcomplex of dimension smaller than $\dim (W_f-{\rm Int} W_f)$ : if the dimension is $0$, then we suppose that the subcomplex is empty. Suppose that the restriction of $q_f$ to the set of all
 singular points whose values are not in the subcomplex before is injective. Then $f$ is said to be {\it normal}.   
\item Suppose that $\dim q_f(S(f))=\dim W_f -1$ and that all points of $q_f(S(f))$ that are not values 
 of fold points form a subcomplex of dimension smaller than $\dim q_f(S(f))$ : if the dimension is $0$, then we suppose that the subcomplex is empty. Then $f$ is said to be {\it almost-fold}.
\item Suppose that the set
$\{p \in q_f(S(f)) \mid {q_f}^{-1}(p) \bigcap S(f)$ consists of more than $1$ singular points.$\}$ is a subcomplex of dimension smaller than $\dim q_f(S(f))$ : if
 the dimension is $0$, then we suppose that the subcomplex is empty. Then $f$ is said to be {\it stable-singular}. If the map is also almost-fold, then it is said to be {\it almost-stable-fold}.
\end{enumerate}
\end{Def}
Almost-fold maps are normal for example.
We see several examples.
\begin{Ex}
\begin{enumerate}
\item Proper ($C^{\infty}$ stable) {\it Morin} maps, which are regarded as generalizations of fold maps (see \cite{golubitskyguillemin} and see also \cite{morin} for example), are almost-fold ({\it resp}. almost-stable-fold). 
\item Proper $C^{\infty}$ stable maps are always almost-stable-fold in the case where the dimension of the target manifold is not larger than that of the source manifold and where the dimension of the target manifold is not so high (smaller than $6$). Note also that proper $C^{\infty}$ stable maps exist densely in such a case (and in addition, proper topologically stable maps exist densely in general). For these facts, see also \cite{golubitskyguillemin} for example and in the proof of Theorem \ref{thm:3} and FIGURE \ref{fig:4}, images of local maps of proper $C^{\infty}$ stable maps on manifolds whose dimensions are larger than $2$ into the plane will be presented.
\item Smooth functions on closed manifolds presented in the first part of Example \ref{ex:1} are Reeb-triangulable but they are generally not normal, almost-fold or stable-singular.
\end{enumerate}
\end{Ex}
\subsection{Cases of triangulable maps such that inverse images of regular values are highly connected.}
We denote the smooth oriented h-cobordism group of $k$-dimensional homotopy spheres by $\Theta_k$. For (oriented) h-cobordism groups of (homotopy spheres), see \cite{milnor} for example. 

\begin{Thm}
\label{thm:2}
Let $f:M \rightarrow N$ be an almost-stable-fold map from an $m$-dimensional smooth closed and oriented manifold $M$ into an $n$-dimensional smooth oriented manifold $N$ satisfying the following.
\begin{enumerate}
\item The relation $m>n \geq 1$ holds.
\item If $m-n$ is odd, then the index of each fold point is $0$ or $1$.
\item The inverse images of regular values are always disjoint unions of homotopy spheres.
\end{enumerate}
If for a regular value $a$, $f^{-1}(a)$ contains a connected component not diffeomorphic to a standard sphere, then $H_n(W_n;\Theta_{m-n})$ is not zero.
\end{Thm}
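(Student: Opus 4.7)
The plan is to deduce Theorem \ref{thm:2} from Theorem \ref{thm:1} by exhibiting a submodule $A\subset\mathcal{O}_{m-n}(\mathbb{Z})$ whose quotient $\mathcal{O}_{m-n}(\mathbb{Z})/A$ is canonically isomorphic to $\Theta_{m-n}$, and by verifying that this $A$ is oriented compatible with $f$ in the sense of Definition \ref{def:5}.

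First I would take $A$ to be the $\mathbb{Z}$-submodule of $\mathcal{O}_{m-n}(\mathbb{Z})$ generated by the class $[X]$ for every oriented closed connected $(m-n)$-manifold $X$ not a homotopy sphere, the class $[S^{m-n}]$ of the standard sphere, the elements $[\Sigma_1\#\Sigma_2]-[\Sigma_1]-[\Sigma_2]$ for all pairs of oriented homotopy $(m-n)$-spheres, and the elements $[\overline{\Sigma}]+[\Sigma]$ for every oriented homotopy sphere $\Sigma$. Since the set of oriented diffeomorphism classes of homotopy $(m-n)$-spheres forms the abelian group $\Theta_{m-n}$ under connected sum with the standard sphere as identity, the assignment sending each homotopy sphere to its h-cobordism class and every other manifold to $0$ descends to an isomorphism $\mathcal{O}_{m-n}(\mathbb{Z})/A\cong\Theta_{m-n}$ (using the h-cobordism theorem in the dimension range where it applies, and low-dimensional Poincar\'e-type facts otherwise). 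Under this isomorphism, a component of $f^{-1}(a)$ not diffeomorphic to $S^{m-n}$ has non-zero image, which is exactly the input required by Theorem \ref{thm:1}.

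Next I would verify compatibility by case analysis on $\gamma\in\Gamma_f$. Fix a triangulation of $N$ fine enough that the singular value set of $\bar{f}$ lies in the $(n-1)$-skeleton, so that every admissible curve meets this set in at most one point. If $\gamma$ meets no singular value, then $q_f^{-1}(\gamma)\cong F\times I$ for the common fiber $F$ and $a_{f,\gamma}=[F]-[F]=0\in A$. If $\gamma$ comes from crossing a fold point of index $0$, the new sphere component appears at a branch whose singular endpoint lies in $W_f-{\rm Int}\,W_f$ by Definition \ref{def:7} and is therefore excluded by Definition \ref{def:5}; the remaining regular endpoint carries a fiber diffeomorphic to $S^{m-n}$, giving $a_{f,\gamma}=\pm[S^{m-n}]\in A$. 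If $\gamma$ comes from crossing a fold point of index $1$, then as illustrated in Figure \ref{fig:1}, $\gamma$ is a tripod in $W_f$ whose three endpoints carry homotopy-sphere fibers $\Sigma_1$, $\Sigma_2$ and $\Sigma_1\#\Sigma_2$, giving $a_{f,\gamma}=\pm([\Sigma_1]+[\Sigma_2]-[\Sigma_1\#\Sigma_2])\in A$; the alternative in which both feet of the attaching $S^0$ lie in a single component is excluded since it would replace one component by the non-homotopy-sphere manifold $\Sigma\#(S^1\times S^{m-n-1})$.

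The hard part is to rule out folds of intermediate index $i$ with $2\leq i\leq\lfloor (m-n+1)/2\rfloor$, which is given by hypothesis when $m-n$ is odd but must be derived from the homotopy-sphere condition when $m-n$ is even. Such a fold performs an $i$-surgery on an embedded $(i-1)$-sphere lying inside a single component $\Sigma$ of the fiber; since $\Sigma$ is a simply connected homotopy sphere, $\pi_{i-1}(\Sigma)=0$ and the embedded sphere is null-homotopic. An elementary Mayer--Vietoris computation then shows that the result of the surgery acquires non-trivial homology in degrees $i$ and $m-n-i$, hence cannot be a homotopy sphere whenever $2\leq i\leq m-n-2$. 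This contradicts the standing hypothesis that every fiber is a disjoint union of homotopy spheres, so no fold point of intermediate index can occur. With the case analysis complete, $A$ is oriented compatible with $f$, and Theorem \ref{thm:1} applied to $A$ and the distinguished non-standard fiber component produces $H_n(W_f;\Theta_{m-n})\neq 0$.
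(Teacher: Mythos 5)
Your proposal is correct and takes essentially the same route as the paper: one builds a submodule $A\subset{\mathcal{O}}_{m-n}(\mathbb{Z})$ compatible with $f$ whose quotient is $\Theta_{m-n}$ by reading off the local structure of $q_f^{-1}(\gamma)$ over transverse arcs (cylinders away from singular values, a disc at an index-$0$ fold contributing $\pm[S^{m-n}]$, and a punctured sphere at an index-$1$ fold contributing $\pm([\Sigma_1]+[\Sigma_2]-[\Sigma_1\#\Sigma_2])$, the identification of the third boundary fiber with $\Sigma_1\#\Sigma_2$ using the h-cobordism theorem), and then applies Theorem \ref{thm:1}. The one place you go beyond the paper is the explicit exclusion of fold points of intermediate index when $m-n$ is even, where hypothesis (2) imposes no restriction but the paper's list of possible pieces tacitly assumes only indices $0$ and $1$ occur; your surgery argument supplies this (and in the even-dimensional case an Euler-characteristic count makes it immediate), which is a worthwhile clarification rather than a different method.
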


\begin{proof}
For a small curve simplicially transverse to $f$ which does not contain singular values, the inverse image is a disjoint union of cylinders of homotopy spheres. For a small curve simplicially transverse to $f$ containing just one singular value, by a fundamental discussion on the transversality of the curve to the singular value set, which we explain again and more precisely in the proof of Theorem \ref{thm:3}, an extension of the present theorem, we may regard that each connected component of the inverse image is regarded
 as a disjoint union of a standard closed disc of dimension $m-n+1$, a compact smooth manifold PL homeomorphic to a compact manifold obtained by removing $3$ disjoint smoothly embedded standard closed discs of dimension $m-n+1$ from $S^{m-n+1}$, or a cylinder of a homotopy sphere. Note that FIGURE \ref{fig:1} and FIGURE \ref{fig:2} show the cases where the connected components of the inverse images contain singular points.

From this argument and the definition of a module compatible with $f$, we can take a module $A$ compatible with $f$ as a commutative group so that the commutative group ${\mathcal{O}}_{m-n}(\mathbb{Z})/A$ is isomorphic to the group $\Theta_{m-n}$. There exists a one-to-one correspondence between the set of all oriented diffeomorphism types of ($m-n$)-dimensional oriented homotopy spheres and ${\Theta}_{m-n}$
 and by virtue of the fact, we have the desired result.
\end{proof}
\begin{Ex}
Let $m>n\geq 2$ hold. An $m$-dimensional manifold represented as the total space of a smooth bundle over $S^n$ with fibers diffeomorphic to a homotopy sphere $\Sigma$ obtained by gluing two standard closed discs by a diffeomorphism on the boundaries admits a stable fold map into ${\mathbb{R}}^n$ such that the following hold.
\begin{enumerate}
\item The singular value set consists of two spheres.
\item For connected components of the regular value set, the inverse images are empty set, $S^{m-n}$ and a disjoint union of two copies of $\Sigma$, respectively.
\end{enumerate}

See FIGURE \ref{fig:3}. Thus the Reeb space is simple homotopy equivalent to $S^n$ and the top homology group does not vanish, This fact of the top homology group follows also from Theorem \ref{thm:2} in the case where $\Sigma$ is a homotopy sphere which is not a standard sphere.
\begin{figure}
\includegraphics[width=40mm]{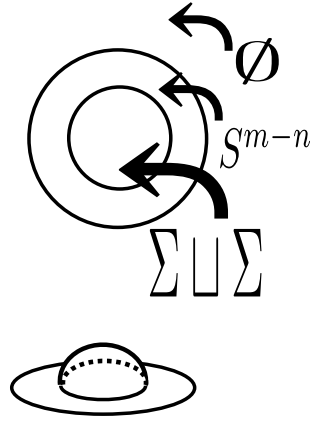}
\caption{A stable fold map on the total space of a smooth bundle over $S^n$ with fibers diffeomorphic to a homotopy sphere $\Sigma$ (each manifold represents the corresponding inverse image of the corresponding connected component of the regular value set) and the Reeb space.}
\label{fig:3}
\end{figure} 

For this, see also \cite{kitazawa} and \cite{kitazawa2}, in which such manifolds are characterized by maps presented here with additional differential topological conditions or more precisely, such maps obtained by the methods used in the proof of Theorem \ref{thm:4} or fundamental construction of {\it round} fold maps. 

Last, review about the fact on isomorphisms of homology and homotopy groups induced by the quotient maps onto the Reeb spaces mentioned in Example \ref{ex:1} (\ref{ex:1.2}). If the difference of the dimensions of the source and the target spaces is large, then the $n$-th homology group with coefficient ring $\mathbb{Z}$ of the source manifold is $\mathbb{Z}$, isomorphic to that of $S^n$. Note that in the case where the relation $m \neq 2n-1$ holds, such a fact holds. 
\end{Ex}

We remark on Theorem \ref{thm:2}.

\begin{Rem}
In \cite{kobayashisaeki}, stable maps of closed manifolds whose dimensions are larger than $2$ into the plane are discussed and for most of the maps in the paper, the 2nd homology groups (whose coefficient rings are $\mathbb{Z}$ or $\mathbb{Z}/2\mathbb{Z}$) of the Reeb spaces vanish; for example, the cases where the Reeb spaces are merely $2$-dimensional manifolds with non-empty boundaries are studied. 

In the last section of \cite{kobayashisaeki}, fold maps satisfying the assumption of Theorem \ref{thm:2} with the following additional conditions are studied (the explanation of the considered class here is a bit different from that of the original paper but essentially same as the original one).
\begin{enumerate}
\item The source manifold $M$ is simply-connected and the dimension of $M$ is larger than $3$.
\item The fold map $f:M \rightarrow {\mathbb{R}}^2$ is simple.
\item If the dimension of the source manifold is odd, then the index of each fold point is $0$ or $1$.
\item The inverse images of regular values are disjoint unions of standard spheres.
\item The top homology group or the 2nd homology group of $W_f$ is zero (the coefficient module is $\mathbb{Z}$).
\end{enumerate}
Let $A$ be a module over a principle ideal domain $R$.
 According to arguments by \cite{saekisuzuoka}, \cite{kitazawa} and \cite{kitazawa2}, the homology group $H_k(M;A)$ of source manifold and $H_k(W_f;A)$ is shown to be isomorphic for $0 \leq k \leq m-3$ (the proofs are done for homotopy groups and they are applicable for homology groups by fundamental properties of handle decompositions of PL manifolds). Thus $M$ is a homotopy sphere and $H_2(W_f;A)$ is zero. Thus, we can replace the term "disjoint unions of standard spheres" by "homotopy spheres" in the fourth
 additional assumption by virtue of Theorem \ref{thm:2}. Note also that we cannot deduce Theorem \ref{thm:2} directly from Proposition \ref{prop:3}.  
\end{Rem}
\begin{Rem}
In the case where the relation $m-n \neq 1,3,7,15$ holds, we can drop the assumption on indices in Theorem \ref{thm:2}. 
\end{Rem}

For integers $0<k_1<k_2$, the {\it $k_1$-connective $k_2$-dimensional smooth oriented cobordism group} $\mathcal{C}_{k_1}^{k_2}$ is a commutative group defined by an equivalence relation
 on the family of $k_2$-dimensional smooth, closed and $k_1$-connected oriented manifolds with an empty set: two elements in the family are equivalent if and only if their disjoint union bounds a $k_1$-connected smooth compact oriented manifold.
For the definition, see also \cite{stong} and \cite{wrazidlo} for example. We have the following as an extension of Theorem \ref{thm:2}.
\begin{Thm}
\label{thm:3}
Let $f:M \rightarrow N$ be an almost-stable-fold map from an $m$-dimensional smooth closed and oriented manifold $M$ into an $n$-dimensional smooth oriented manifold $N$ satisfying the following.
\begin{enumerate}
\item The relation $m-n \geq 2$ holds and $m-n \neq3,7,15 $.
\item $k$ is a positive integer satisfying $2k<m-n$ in the case where $m-n$ is even and $2k+1<m-n$ in the case where $m-n$ is odd and
 all manifolds appearing as connected components of inverse images of regular values are $k$-connected.
\end{enumerate}
If for a regular value $a$, $f^{-1}(a)$ contains a connected component which is not zero in $\mathcal{C}_{k}^{m-n}$ , then $H_n(W_f;\mathcal{C}_{k}^{m-n})$ is not zero.
\end{Thm}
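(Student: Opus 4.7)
My plan is to apply Theorem \ref{thm:1} with $R=\mathbb{Z}$ and a submodule $A\subset \mathcal{O}_{m-n}(\mathbb{Z})$ chosen so that the quotient $\mathcal{O}_{m-n}(\mathbb{Z})/A$ is canonically isomorphic to $\mathcal{C}_k^{m-n}$. Specifically, I would let $A$ be generated by all oriented diffeomorphism types of closed $(m-n)$-manifolds that fail to be $k$-connected, together with all differences $[F_0]-[F_1]$ of $k$-connected oriented $(m-n)$-manifolds which cobound a $k$-connected compact oriented $(m-n+1)$-manifold. By the very definition of the $k$-connective cobordism group, this gives the required identification of the quotient, so under this identification the hypothesis of Theorem \ref{thm:3} is exactly the hypothesis of Theorem \ref{thm:1}, and the conclusion will follow as soon as one verifies that $A$ is oriented compatible with $f$.

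The substance of the proof is this compatibility check. For any $\gamma\in\Gamma_f$ set $X_\gamma:=q_f^{-1}(\gamma)\subset M$; this is a compact oriented $(m-n+1)$-manifold whose boundary, by the transversality conditions built into $\Gamma_f$ and the orientation conventions of Definition \ref{def:5}, is a signed disjoint union of oriented fibres over $\partial \gamma \cap \mathrm{Int}\,W_f$ realizing $\pm a_{f,\gamma}$. Hence it suffices to show that $X_\gamma$ is itself $k$-connected, for then $X_\gamma$ witnesses $a_{f,\gamma}=0$ in $\mathcal{C}_k^{m-n}$, i.e.\ $a_{f,\gamma}\in A$. When $\bar f(\gamma)$ avoids the singular value set, $X_\gamma$ is a cylinder over a $k$-connected fibre and is trivially $k$-connected. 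When $\bar f(\gamma)$ meets the singular value set, Definition \ref{def:6} together with the almost-stable-fold hypothesis forces this to occur at a single interior point above a single fold point of some index $i$. In that case the local model identifies $X_\gamma$ either with the trace of an $i$-surgery between two $k$-connected fibres $F_0,F_1$ (for $1\leq i\leq m-n$), or with a disc cap / pair-of-pants piece incident to the boundary stratum $W_f-\mathrm{Int}\,W_f$ (as in FIGURE \ref{fig:1}). In the surgery case, $X_\gamma$ deformation retracts onto $F_0\cup_\phi e^i$, so $\pi_j(X_\gamma)=\pi_j(F_0)=0$ whenever $j\leq i-1$; reversing the cobordism and arguing from $F_1$ instead gives $\pi_j(X_\gamma)=0$ for $j\leq m-n-i$. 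The numerical hypothesis $2k<m-n$ (resp.\ $2k+1<m-n$) yields $k+1\leq m-n-k$, so every possible $i$ lies in $\{i\geq k+1\}\cup\{i\leq m-n-k\}$ and $X_\gamma$ is $k$-connected. The disc-cap and pair-of-pants subcases are handled directly: the relevant local piece has the homotopy type of a wedge of copies of $S^{m-n}$ glued to cylinders, hence is $(m-n-1)$-connected and in particular $k$-connected under the same arithmetic condition.

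The main obstacle will be the careful local identification of $X_\gamma$ with the appropriate elementary piece at each kind of fold point (including the pair-of-pants branching that arises when a component of $\bar f^{-1}$ of the transverse curve meets the fold stratum inside $W_f$), together with the tracking of orientation signs so that $\partial X_\gamma$ really produces the signed sum $a_{f,\gamma}$ prescribed by Definition \ref{def:5}. The dimensional exclusion $m-n\notin\{3,7,15\}$ then enters precisely in this local analysis: in those dimensions the usual surgery-theoretic handle decompositions of $k$-connected manifolds can admit extra framing ambiguities, and ruling them out ensures that preservation of $k$-connectedness along the cobordism traces, and hence the identification of $\mathcal{O}_{m-n}(\mathbb{Z})/A$ with $\mathcal{C}_k^{m-n}$ via these traces, genuinely goes through.
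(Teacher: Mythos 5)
Your overall strategy is exactly the paper's: choose a compatible submodule $A\subset\mathcal{O}_{m-n}(\mathbb{Z})$ with quotient $\mathcal{C}_k^{m-n}$, verify compatibility by showing that the preimage $X_\gamma=q_f^{-1}(\gamma)$ of a simplicially transverse curve is a $k$-connected cobordism (via the two-sided handle-index estimate, which the paper states only as ``handles with indices larger than $k$ and smaller than $m-n-k$''), and then invoke Theorem \ref{thm:1}. Your local analysis is in fact more explicit than the paper's, and your arithmetic ($2k<m-n$ forces every handle index $i$ to satisfy $i\geq k+1$ or $i\leq m-n-k$) is correct.

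There is, however, one concrete inaccuracy in your set-up. The submodule $A$ you describe is generated by the non-$k$-connected types together with differences $[F_0]-[F_1]$; every element of the subgroup so generated has coefficient sum zero on the $k$-connected part, so it cannot contain the elements $a_{f,\gamma}=[F_0]+[F_1]-[F_0\# F_1]$ that arise at the Y-shaped branchings you yourself single out (an index-$1$ fold merging or splitting components, as in FIGURE \ref{fig:1}). Consequently your $A$ is not compatible with $f$, and $\mathcal{O}_{m-n}(\mathbb{Z})/A$ is the free abelian group on the set of $k$-connective cobordism classes rather than $\mathcal{C}_k^{m-n}$ itself. The repair is immediate: take $A$ to be the full kernel of the natural surjection $\mathcal{O}_{m-n}(\mathbb{Z})\rightarrow\mathcal{C}_k^{m-n}$ (equivalently, add the generators $[F_0]+[F_1]-[F_0\# F_1]$ for $k$-connectedly cobordant configurations); your own verification that $X_\gamma$ is $k$-connected, including in the pair-of-pants case, is precisely the proof that every $a_{f,\gamma}$ lies in this kernel. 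Finally, your explanation of where the exclusion $m-n\neq 3,7,15$ enters (framing ambiguities in those dimensions) is speculative and is not what the brief published proof records; the paper leaves this point unexplained as well, so it does not affect the comparison, but you should not present it as established.
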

\begin{proof}
For any curve simplicially transverse to $f$ containing just one singular value, if a connected component ${\gamma}_{\alpha}$ of ${\bar{f}}^{-1}(\gamma)$ is homeomorphic to a closed interval, then the inverse image of ${\gamma}_{\alpha}$ by the map $q_f$ is $k$-connected. We can know this by the fundamental discussions on handle decompositions of Morse functions
 or the attachments of ($m-n+1$)-dimensional handles with indices larger than $k$ and smaller than $m-n-k$. More precisely, each singular value correspond to a handle. Note that for this discussion, we may need to perturb the curve in the target manifold (see FIGURE \ref{fig:4}: see \cite{golubitskyguillemin} for fundamental theory of stable maps on manifolds whose dimensions are larger than $1$ into the plane including fundamental theory of {\it cusp} points etc.) and note also that in this case, the resulting curve may contain more than $1$ singular values in the interior and can be divided into curves simplicially transverse to the map each of which contains just one singular value with inverse image containing just one singular point by a finite number of points. It is also important that these perturbations do not affect the structure of a module compatible with the map if we take the module as a sufficiently small module.
\begin{figure}
\includegraphics[width=40mm]{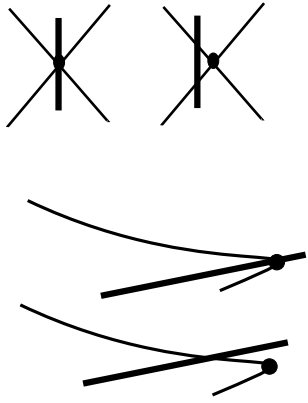}
\caption{Perturbations in cases where the target space is $2$-dimensional: The first figure represents a case where the curve contains a self-intersection of the singular value set and the second one represents a case where the curve contains a value of the map at a {\it cusp} point.}
\label{fig:4}
\end{figure} 

By virtue of this fact and Theorem \ref{thm:1}, we have the result.
\end{proof}
It seems to be difficult to find examples explaining Theorem \ref{thm:3} explicitly well due to the difficulty of knowing explicit structures of $\mathcal{C}_{k_1}^{k_2}$ in general.  

\begin{Rem}
In the case where the source manifolds are not oriented, by considering suitable coefficient modules, we can prove results similar to Theorem \ref{thm:2} and Theorem \ref{thm:3}. 
\end{Rem}

\subsection{Cases where the codimensions are $-2$.}
We consider cases where inverse images of regular values are closed surfaces.

\begin{Thm}
\label{thm:4}
Let $R \neq \{0\}$ be a principle ideal domain and $g$ be a positive integer larger than $2$. Then for any integer $m>2$, there exists a smooth, closed and connected, {\rm (}oriented{\rm )} manifold $M$ and a stable fold map $f:M \rightarrow {\mathbb{R}}^{m-2}$  
 satisfying the following.
\begin{enumerate}
\item There exists an $R$-module $A$ compatible with $f$ such that the value of the quotient map from ${\mathcal{N}}_{2}(R)$ {\rm (}resp. ${\mathcal{O}}_{2}(R)${\rm )}
 to ${\mathcal{N}}_{2}(R)/A$ {\rm (}resp. ${\mathcal{O}}_{2}(R)/A${\rm )} at the
 diffeomorphism type of a closed orientable {\rm (}resp. oriented{\rm )} surface ${\Sigma}_g$ of genus $g$ is not zero.
\item There exists a connected component of inverse image of a regular value diffeomorphic to ${\Sigma}_g$.
\item The number of connected components of the set of all singular points whose indices are $0$ are $2$ in the case where $m=3$ and $1$ in the case where $m>3$.
\end{enumerate}
Furthermore, $H_n(W_f;{\mathcal{N}}_{2}(R)/A)$ {\rm (}resp. $H_n(W_f;{\mathcal{O}}_{2}(R)/A)${\rm )} is not zero.
\end{Thm}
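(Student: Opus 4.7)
The plan is to construct an explicit stable fold map $f\colon M\to\mathbb{R}^{m-2}$ (a Morse function when $m=3$), identify a compatible submodule $A$ for which the class $[\Sigma_g]$ remains nonzero in the quotient, and apply Theorem~\ref{thm:1}. First I construct $(M,f)$. For $m=3$, I take $M=\Sigma_g\times S^1$ with $f(x,\theta)=\epsilon h(x)+\cos\theta$, where $h\colon\Sigma_g\to\mathbb{R}$ is a Morse function with exactly two local minima (permitted, since the Morse inequalities on $\Sigma_g$ only require $c_0\ge 1$) and $\epsilon>0$ is small enough to make $f$ Morse. The critical points of $f$ are products of critical points of $h$ and of $\cos$, so the two index-$0$ critical points of $f$ arise as the two minima of $h$ paired with the minimum of $\cos\theta$, matching condition (3); at any regular value of $f$ in the mid-range of $\cos\theta$ the fibre is two perturbed copies of $\Sigma_g$, giving condition (2). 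For $m>3$, I adapt the round-fold construction from the example preceding Theorem~\ref{thm:3} (illustrated in FIGURE~\ref{fig:3}) by replacing the homotopy-sphere fibre there with $\Sigma_g$: let $M$ be the total space of the trivial $\Sigma_g$-bundle over $S^{m-2}$ and build $f$ with a single outermost index-$0$ definite fold sphere (condition (3)) together with inner indefinite fold spheres arranged so that $\Sigma_g$ appears as a connected fibre component and so that at least one inner fold creates a splitting-type Y-junction in $W_f$.

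Next I describe $W_f$. For $m>3$, the same Mayer--Vietoris argument that yields $W_f\simeq S^{m-2}$ in the example of FIGURE~\ref{fig:3} applies here: the outer definite fold bounds $W_f$, the annular middle region contributes a thickened sphere, and the splitting Y-junction caps off the inside with two disks, so $W_f$ is homotopy equivalent to $S^{m-2}$, with nontrivial top homology for suitable coefficients. For $m=3$, the fibres of $f=\epsilon h+\cos\theta$ have two connected components over a generic regular value, and these two ``branches'' of $W_f$ are joined at the two extrema of $\cos\theta$; hence $W_f$ deformation retracts onto a circle, giving $H_1(W_f;\mathbb{Z})=\mathbb{Z}$.

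Then I analyse the compatibility module. By Definition~\ref{def:5}, the minimal compatible $A_{\min}\subset\mathcal{O}_2(R)$ (resp.\ $\mathcal{N}_2(R)$) is generated by three kinds of relations: (i) the boundary contribution $[S^{2}]\in A_{\min}$ from the FIGURE~\ref{fig:2} configuration at the definite fold, (ii) two-endpoint relations $[F]-[F']\in A_{\min}$ from non-branching indefinite folds, and (iii) three-endpoint Y-junction relations of the form $[F_1]+[F_2]-[F_0]\in A_{\min}$ from splittings or mergings as in FIGURE~\ref{fig:1}. The key step is to engineer the inner fold pattern so that $\Sigma_g$ enters $A_{\min}$ only through a splitting-type relation of the form $2[\Sigma_g]-[F_0]\in A_{\min}$, leaving $[\Sigma_g]$ as a nonzero $2$-torsion element of the quotient, rather than through a handle-adder relation that would force $[\Sigma_g]=0$. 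Taking $A=A_{\min}$ (or any compatible extension still avoiding $[\Sigma_g]$), condition (2) supplies a regular-value fibre containing a $\Sigma_g$ component whose class is nonzero, and Theorem~\ref{thm:1} then yields the non-vanishing of $H_n(W_f;\mathcal{N}_2(R)/A)$ (resp.\ $H_n(W_f;\mathcal{O}_2(R)/A)$), completing the proof.

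The main obstacle lies in this third step: since every closed oriented surface of positive genus is built from $S^2$ by repeated single-handle additions, the naive fold structure forces a handle-adder chain that would collapse $[\Sigma_g]$ in $A_{\min}$; routing this chain around $\Sigma_g$ via splitting and merging Y-junctions (so that no handle-adder fold has $\Sigma_g$ as one of its two adjacent fibres) is precisely the ingredient that makes the analogue in Theorem~\ref{thm:2} work for an exotic homotopy sphere, and the plan is to mirror that strategy for $\Sigma_g$.
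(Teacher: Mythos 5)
Your overall skeleton matches the paper's: build an explicit Morse function for $m=3$, promote it to a stable fold map into ${\mathbb{R}}^{m-2}$ for $m>3$ by the round fold map (spinning) construction, compute the minimal compatible submodule $A_{\min}$ from the Y-junction and handle-adder relations of the Reeb graph, and finish with Theorem \ref{thm:1}. The difference, and the genuine gap, is in how you arrange for $[{\Sigma}_g]$ to survive in the quotient. You engineer the fold pattern so that ${\Sigma}_g$ enters $A_{\min}$ only through a relation of the form $2[{\Sigma}_g]-[F_0]$, and in your concrete $m=3$ model $M={\Sigma}_g\times S^1$, $f=\epsilon h+\cos\theta$, the branch $F_0$ (the double of ${\Sigma}_g$ minus a disc, a connected genus-$2g$ surface) is joined to the small sphere at the maximum of $f$ by a chain of two-endpoint handle-adding relations, so $[F_0]=[S^2]=0$ and hence $2[{\Sigma}_g]=0$ in ${\mathcal{O}}_2(R)/A_{\min}$. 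But the theorem quantifies over an arbitrary nonzero principal ideal domain $R$, and $A$ is an $R$-submodule: whenever $2$ is a unit in $R$ (e.g.\ $R=\mathbb{Q}$, or any field of characteristic different from $2$), $2[{\Sigma}_g]\in A_{\min}$ forces $[{\Sigma}_g]=\tfrac12\cdot 2[{\Sigma}_g]\in A_{\min}$, so no compatible $A$ can keep $[{\Sigma}_g]$ nonzero. Your plan therefore proves the statement only for those $R$ in which $2$ is not invertible (e.g.\ $\mathbb{Z}$, $\mathbb{Z}/2\mathbb{Z}$), not for all $R$ as required.

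The paper avoids this by designing the Reeb graph (Figures \ref{fig:5-1} and \ref{fig:5-2}, one pattern for $g$ odd and one for $g$ even) so that every splitting vertex adjacent to ${\Sigma}_g$ produces two components of strictly smaller, and suitably chosen, genera; the resulting relations express $[{\Sigma}_g]$ as a unit-coefficient combination of classes of lower-genus surfaces that are themselves kept away from the $[S^2]=0$ chain, so $[{\Sigma}_g]$ stays free (not torsion) in the quotient and survives over every $R\neq\{0\}$. To repair your argument you would need to replace the symmetric splitting ${\Sigma}_{2g}\to{\Sigma}_g\sqcup{\Sigma}_g$ by asymmetric splittings ${\Sigma}_g\to{\Sigma}_{k_1}\sqcup{\Sigma}_{k_2}$ with $k_1\neq k_2$ and to verify, by an explicit generator count for $A_{\min}$, that no chain of relations connects $[{\Sigma}_g]$ to $0$ with an invertible coefficient. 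A secondary, smaller issue is that your $m>3$ construction is only sketched (``arranged so that \dots''); the paper makes this concrete by taking the $m=3$ Reeb graph to be that of a Morse function on a compact manifold with boundary and spinning it, which is worth spelling out since condition (3) on the single index-$0$ fold component depends on it.
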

\begin{proof}
First we prove the former part in the case where $m=3$. We show in the case where $g$ is odd. We construct a Morse function such that the Reeb space is obtained by gluing two
 copies of a graph as presented in FIGURE \ref{fig:5-1} ; we attach one of the copies to the other one upside down and to show this, we draw a thick horizontal line in the bottom of the figure.  
We can see that the corresponding element of ${\mathcal{N}}_{2}(R)/A$ (${\mathcal{O}}_{2}(R)/A$) of the ({\it resp}. oriented) diffeomorphism type of ${\Sigma}_g$ does not vanish. 

In the case where $g$ is even, we use the graph presented in FIGURE \ref{fig:5-2} instead. Note that around the square-shaped dots, inverse images of regular values are closed and orientable surfaces whose genera are not smaller than $1$ and not larger than $g-2$.  

To show the statement for general $m \geq 3$, we consider the graph in the figure as the Reeb space of a Morse function on a compact manifold with non-empty boundary such that on the boundary the values are minimal and that the inverse image of the minimal value and the boundary coincide. We consider the product of the Morse function and the identity map ${\rm id}_{S^{n-1}}$. The
 image is regarded as the set of points in ${\mathbb{R}}^{m-2}$ such that the distances to the origin is not smaller than $a>0$ and not larger than $b>0$. We consider an appropriate product bundle over the set of points in ${\mathbb{R}}^{m-2}$ such that the distances to the origin is not larger than $a$. This is a fundamental construction of a {\it round} fold map introduced and systematically studied in \cite{kitazawa} and \cite{kitazawa2}. We will show another example of round fold maps later in FIGURE \ref{fig:7}.

\begin{figure}
\includegraphics[width=40mm]{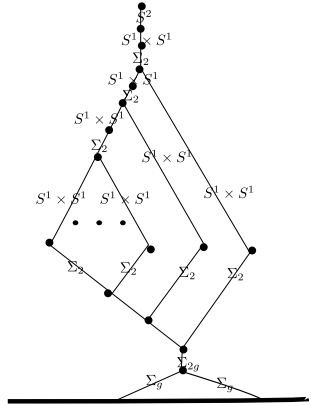}
\caption{The graph to obtain a Reeb graph for a desired Morse function: the manifolds represent corresponding connected components of inverse images of regular values, ${\Sigma}_k$ represents
 a closed orientable surface of genus $k>0$ and the dots represent corresponding (values at) singular points ($g$ is odd).}
\label{fig:5-1}
\end{figure} 
\begin{figure}
\includegraphics[width=40mm]{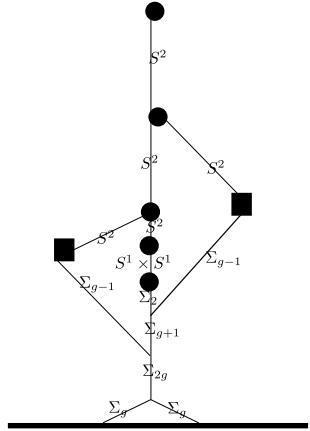}
\caption{The graph for the case where $g$ is even.}
\label{fig:5-2}
\end{figure} 
By the construction, we also have the third condition of the former part.

We have the latter part easily from Theorem \ref{thm:1} 
\end{proof}

Note that for the resulting maps we cannot apply Proposition \ref{prop:3}.
\begin{Thm}
\label{thm:5}
Let $f:M \rightarrow N$ be an almost-stable-fold map from an $m$-dimensional smooth closed manifold $M$ into an $n$-dimensional smooth oriented manifold $N$ satisfying the following.
\begin{enumerate}
\item $m-n=2$.
\item For a regular value $a$, $f^{-1}(a)$ contains a connected component diffeomorphic to a non-orientable closed surface.
\item The maximal value of genera of connected components of inverse images of regular values which are non-orientable, closed and connected surfaces is $g>0$.
\item For any small curve simplicially transverse to $f$ containing just one singular value such that the inverse image of the value contains just one singular point and that the point is fold, if a connected component ${\gamma}_{\alpha}$ of ${\bar{f}}^{-1}(\gamma)$ is homeomorphic to a closed interval, then the inverse images of points of the boundary by the map $q_f$ are as one of the following.
\begin{enumerate}
\item Both are closed and connected non-orientable surfaces of genera smaller than $g$.
\item Both are closed and connected orientable surfaces.
\item One is a closed and connected orientable surface and the other is a closed and connected non-orientable surface of genus $g$.   
\end{enumerate}
In this situation, if a connected component ${\gamma}_{\alpha}$ of ${\bar{f}}^{-1}(\gamma)$ is not homeomorphic to a closed interval, then it is homeomorphic to a Y-shaped $1$-dimensional polyhedron and the following hold.
\begin{enumerate}
\item The inverse images of points of the boundary by the map $q_f$ are all orientable or all non-orientable.
\item In the latter case just before, one of the non-orientable surfaces must be of genus $g$ and the genera of the others must be smaller than $g$. 
\end{enumerate}
\end{enumerate}
If for a regular value $a$, $f^{-1}(a)$ contains a connected component diffeomorphic to a non-orientable closed surface of genus
 smaller than $g$, then $H_n(W_f;\mathbb{Z})$ is not zero.
\end{Thm}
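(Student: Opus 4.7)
The plan is to apply Theorem \ref{thm:1} with a tailored submodule $A\subset\mathcal{N}_2(\mathbb{Z})$. Taking $R=\mathbb{Z}$, I define the homomorphism
\[
\phi\colon \mathcal{N}_2(\mathbb{Z})\longrightarrow \mathbb{Z}/2\mathbb{Z}
\]
by $\phi([\Sigma_k])=0$ for every orientable diffeomorphism type, $\phi([N_g])=0$, and $\phi([N_j])=1$ for $1\leq j<g$. Setting $A:=\ker\phi$ yields $\mathcal{N}_2(\mathbb{Z})/A\cong\mathbb{Z}/2\mathbb{Z}$, and the distinguished class $[N_h]$ appearing in $f^{-1}(a)$ satisfies $\phi([N_h])=1\neq 0$.

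My first task is to verify that $A$ is compatible with $f$, i.e.\ $\phi(a_{f,\gamma})=0\in\mathbb{Z}/2\mathbb{Z}$ for every $\gamma\in\Gamma_f$. Hypothesis (4) is engineered precisely for this. For a closed-interval component of $\bar f^{-1}(\gamma)$, the two endpoint fibres fall into case (a) (both non-orientable of genera $<g$, both of $\phi$-value $1$), case (b) (both orientable, both of $\phi$-value $0$) or case (c) (an orientable and $[N_g]$, both of $\phi$-value $0$); in each case the signed difference has even $\phi$-image. For a Y-shaped component, hypothesis (4) produces either three orientable endpoints (all $\phi$-values zero) or the configuration of one $[N_g]$ together with two $[N_{h_i}]$ ($h_i<g$); the three $\phi$-values are then $0,1,1$, whose signed sum $\pm 0\pm 1\pm 1$ is always even. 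Hence $\phi$ factors through $\mathcal{N}_2(\mathbb{Z})/A$.

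With compatibility established, Theorem \ref{thm:1} produces a non-zero class in $H_n(W_f;\mathcal{N}_2(\mathbb{Z})/A)\cong H_n(W_f;\mathbb{Z}/2\mathbb{Z})$, represented by the explicit cycle $z=\sum_\sigma[F_\sigma]\sigma$ whose coefficients are the $\phi$-reductions of the fibre diffeomorphism types. To deduce $H_n(W_f;\mathbb{Z})\neq 0$, I lift $z$ to an integral chain $\tilde z=\sum_\sigma\tilde a_\sigma\sigma$ with $\tilde a_\sigma\in\{-1,0,+1\}$ and arrange $\partial\tilde z=0$ in $C_{n-1}(W_f;\mathbb{Z})$. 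Closed-interval $(n-1)$-faces pose no difficulty, since the two incident $n$-simplices share the same $|\tilde a_\sigma|$-value. Y-shape $(n-1)$-faces are the delicate case: hypothesis (4) forces the trunk of each non-orientable Y-shape to carry $[N_g]$ (hence $\tilde a=0$) and the two branches to carry $[N_{h_i}]$ of $\phi$-value $1$; the sign convention of Definition \ref{def:5} then permits assigning the two branches opposite signs, so the local contribution at the branching $(n-1)$-face vanishes in $\mathbb{Z}$. A consistent global choice exists provided the subcomplex of $W_f$ supported on non-orientable-$<g$-fibre simplices is orientable as an $n$-dimensional pseudomanifold, and its mod-$2$ reduction is $z$.

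The main obstacle is exactly this global orientability: local sign cancellation at each Y-shape follows from hypothesis (4) together with Definition \ref{def:5}, but assembling the local $\pm 1$ choices into a coherent global labelling requires controlling monodromy around loops in the non-orientable-$<g$ subcomplex of $W_f$. In generic cases this can be verified directly from the surgery description of the index-$1$ fold that splits $[N_g]$ into $[N_{h_1}]\sqcup[N_{h_2}]$, using that all non-orientable Y-shapes in $W_f$ have the common trunk label $[N_g]$; alternatively, should monodromy obstruct a naive lift, one can twist $\phi$ by a $1$-cocycle on $W_f$ recording the discrepancy, or invoke the universal coefficient theorem after ruling out $2$-torsion in $H_{n-1}(W_f;\mathbb{Z})$ by an analysis of $W_f$ as a polyhedron of dimension $n$.
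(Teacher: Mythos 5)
Your construction of $A=\ker\phi$ and the verification that $\phi(a_{f,\gamma})=0$ are fine as far as they go, but they prove the wrong statement: since $\mathcal{N}_2(\mathbb{Z})/\ker\phi\cong\mathbb{Z}/2\mathbb{Z}$, Theorem \ref{thm:1} only yields $H_n(W_f;\mathbb{Z}/2\mathbb{Z})\neq 0$, whereas the theorem asserts $H_n(W_f;\mathbb{Z})\neq 0$. That implication genuinely fails for a general $n$-dimensional polyhedron: $H_n(W_f;\mathbb{Z})$ is free, and $H_n(W_f;\mathbb{Z}/2\mathbb{Z})\cong\bigl(H_n(W_f;\mathbb{Z})\otimes\mathbb{Z}/2\mathbb{Z}\bigr)\oplus \mathrm{Tor}\bigl(H_{n-1}(W_f;\mathbb{Z}),\mathbb{Z}/2\mathbb{Z}\bigr)$, so the mod $2$ class you produce could live entirely in the Tor term (compare ${\mathbb{R}}P^2$ with $n=2$). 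You recognize this --- the last third of your argument is an attempt to lift the mod $2$ cycle to an integral one --- but that lift is precisely where the proof stops being a proof: you concede that the ``main obstacle'' is the global consistency of the local $\pm1$ choices around the Y-shaped strata, and neither fallback you offer is viable (there is no reason for $H_{n-1}(W_f;\mathbb{Z})$ of a Reeb space to be free of $2$-torsion, and ``twisting $\phi$ by a $1$-cocycle'' is a hope, not an argument). A secondary omission: you verify compatibility only for the curves described in hypothesis (4), whereas a general $\gamma\in\Gamma_f$ may pass through cusp values or multiple singular values; the reduction to single-fold curves needs the perturbation argument recalled from the proof of Theorem \ref{thm:3}, which you do not invoke.

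The idea your proposal is missing is the paper's different choice of $A$: rather than killing $[N_g]$, one imposes the relations $[N_k]-[N_{k'}]$ for the genus-$<g$ non-orientable types and $[N_g]-[N_k]-[N_{k'}]$ for the Y-shapes, so that in the quotient all genus-$<g$ non-orientable types become a single generator $x$ while $[N_g]$ (and, via case (c), the orientable types) become $2x$. Hypothesis (4) is designed exactly so that every $a_{f,\gamma}$ lies in this $A$, and the point is that the distinguished fibre class is then the generator $x$ of a $\mathbb{Z}$ (not $\mathbb{Z}/2\mathbb{Z}$) quotient, i.e.\ a non-torsion element of a free coefficient module; Theorem \ref{thm:1} then gives $H_n(W_f;\mathbb{Z})\neq 0$ directly, with no lifting step at all. (The paper does hedge that the quotient may degenerate to $\mathbb{Z}/2\mathbb{Z}$, in which case the same difficulty you hit would reappear; but in the generic $\mathbb{Z}$ case the relation $[N_g]=2x$ is exactly what lets one bypass the orientability problem that sinks your version.)
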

\begin{proof}
As in an argument of the proof of Theorem \ref{thm:3}, note that a curve simplicially transverse to the map can be perturbed appropriately and as a result we can regard this as a curve we can divide into curves simplicially transverse to $f$ each of which contains just one singular value such that the inverse image of the value contains just one singular point and that the point is fold. 

By virtue of all the assumptions, we can take a commutative group $A$ compatible with $f$ so that the group ${\mathcal{O}}_2(\mathbb{Z})/A$ is $\mathbb{Z}/2\mathbb{Z}$ or $\mathbb{Z}$. We can define $A$ so that the  ${\mathcal{O}}_2(\mathbb{Z})/A$ is generated by an element corresponding to a non-orientable closed and connected surface whose genus is smaller than $g$. More precisely, we are enough to set a generator of $A$ so that any element whose coefficient of the element $a$ corresponding to the non-orientable closed surface of genus $k<g$ appearing as a connected component of an inverse image of a regular value is not zero satisfies either of the following.
\begin{enumerate}
\item For the element $a_0$ corresponding to the non-orientable closed surface of genus $g$ and another element $b$ corresponding to the non-orientable closed surface of genus ${k}^{\prime}<g$ satisfying $k+k^{\prime}+1=g$, the element of the generator is represented as $a_0-a-b$. 
\item There exists another element $b$ corresponding to the non-orientable closed surface of genus ${k}^{\prime}<g$ satisfying $|k-k^{\prime}|=1$ and the element of the generator is represented as $a-b$. 
\end{enumerate}   
\end{proof}
As one of simplest examples, FIGURE \ref{fig:6} represents a Morse function satisfying the assumption of Theorem \ref{thm:5}.
FIGURE \ref{fig:7} represents a fold map such that the singular value set is concentrically embedded spheres as shown and that the corresponding inverse
 images are as denoted. It is an explicit fold map
 into the plane satisfying the assumption of Theorem \ref{thm:5}. Note also that it is an example of round fold maps : the presentation is announced in the proof of Theorem \ref{thm:4}. Note that this can be generalized to the cases for general dimensions according to a fundamental discussion as mentioned in the proof of Theorem \ref{thm:4}. Note that the figures represent
 maps to which we cannot apply Proposition \ref{prop:3}. 
\begin{figure}
\includegraphics[width=25mm]{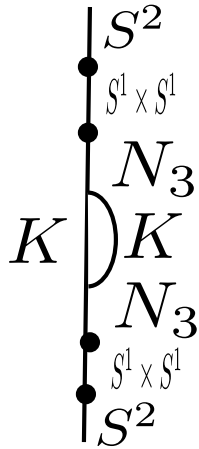}
\caption{An example of the Reeb spaces of Morse functions satisfying the assumption of Theorem \ref{thm:5} : $N_k$ is a non-orientable closed and connected surface of genus $k \geq 1$, $K$ is the Klein Bottle and its genus is $1$.}
\label{fig:6}
\end{figure} 
\begin{figure}
\includegraphics[width=35mm]{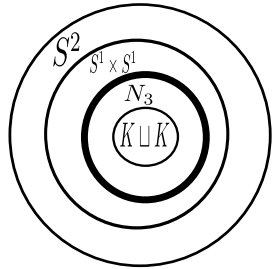}
\caption{An example of round fold maps satisfying the assumption of Theorem \ref{thm:5}.}
\label{fig:7}
\end{figure}

\end{document}